\newtheorem*{thm*}{Theorem}
\newtheorem*{conj*}{Conjecture}
\newtheorem*{remark}{Remark}
\newtheorem{thm}{Theorem}[section]
\newtheorem{lem}{Lemma}[section]
\newtheorem{prop}[thm]{Proposition}
\newcommand{\Z}{\mathbb{Z}}
\newcommand{\Q}{\mathbb{Q}}
\newcommand{\SL}{\operatorname{SL}}
\newcommand{\leg}[2]{\genfrac{(}{)}{}{}{#1}{#2}}
\numberwithin{equation}{section}
\begin{document}
\title[Variations of Lehmer's Conjecture]{Variations of Lehmer's Conjecture for Ramanujan's tau-function}
\author{Jennifer S. Balakrishnan, William Craig and Ken Ono}
\address{Department of Mathematics and Statistics, Boston University,
Boston, MA 02215}
\email{jbala@bu.edu}
\address{Department of Mathematics, University of Virginia, Charlottesville, VA 22904}
\email{wlc3vf@virginia.edu}
\email{ken.ono691@virginia.edu}
\thanks{The first author acknowledges the support of the NSF (DMS-1702196), the Clare Boothe Luce Professorship
(Henry Luce Foundation), a Simons Foundation grant (Grant \#550023), and a Sloan
Research Fellowship. The third author thanks the support of the Thomas Jefferson Fund and the NSF
(DMS-1601306).}
\keywords{Lehmer's Conjecture}

\begin{abstract} 
We consider natural variants of Lehmer's unresolved conjecture that Ramanujan's tau-function never vanishes.
Namely, for $n>1$ we prove that
$$\tau(n)\not \in \{\pm 1, \pm 3, \pm 5, \pm 7, \pm 691\}.
$$
This result is an example of general theorems (see Theorems 1.2 and 1.3 of \cite{BCO}) for newforms with trivial mod 2 residual Galois representation. Ramanujan's well-known congruences for $\tau(n)$ allow for the simplified proof in these special cases.  We make use of the theory of Lucas sequences, the Chabauty--Coleman method for hyperelliptic curves, and facts
about certain Thue equations.
\end{abstract}
\maketitle
\section{Introduction and statement of results}

In his famous paper ``On certain arithmetical functions,'' Ramanujan introduced $\tau(n)$, the  Fourier coefficients of 
 (note: $q:=e^{2\pi i z}$ throughout)
\begin{equation}
\Delta(z)=\sum_{n=1}^{\infty}\tau(n)q^n:=q\prod_{n=1}^{\infty}(1-q^n)^{24}=q-24q^2+252q^3-1472q^4+4830q^5-\cdots,
\end{equation}
the normalized weight 12 cusp form for $\SL_2(\Z)$.
The tau-function has been a remarkable testing ground for the theory of modular forms.
Its multiplicative properties foreshadowed the theory of Hecke operators. Ramanujan conjectured bounds that are now celebrated corollaries of
Deligne's proof of the Weil Conjectures. Furthermore, Serre \cite{SerreRamanujan}  viewed its exceptional congruences \cite{RamanujanUnpublished, Ramanujan}
\begin{equation}\label{RamanujanCongruences}
\tau(n)\equiv \begin{cases} n^2\sigma_1(n)\!\!\!\!\!\pmod 9,\\
 n\sigma_1(n)\!\!\!\pmod 5,\\
n\sigma_3(n)\!\!\!\pmod 7,\\
\sigma_{11}(n)\!\!\pmod{691},
\end{cases}
\end{equation}
where $\sigma_\nu(n):=\sum_{d\mid n}d^{\nu},$
 as hints of a theory of modular $\ell$-adic Galois representations, which are now ubiquitous in number theory.

Surprisingly, Lehmer's Conjecture \cite{Lehmer} that $\tau(n)$ never vanishes remains open.\footnote{Recent work by Calegari and Sardari \cite{CalegariSardari} considers a different aspect. They prove that
 at most finitely many non-CM newforms with fixed tame $p$ level $N$ have vanishing $p$th Fourier coefficient.}
 We investigate a variation of the original speculation that has been previously considered. For odd $\alpha$,
 Murty, Murty and Saradha \cite{MMS} proved that
 $\tau(n)\neq \alpha$ for sufficiently large $n$. 
 Due to the gigantic bounds that  arise when applying the theory of linear forms in logarithms, which is the main technique of their proof, the classification
 of such $n$ has not been carried out for any $\alpha\neq \pm 1$. 
 We prove the following theorem.
 
\begin{thm}\label{Lehmer135}
If  $n>1,$ then we have that
$$
\tau(n) \not \in \{\pm 1, \pm 3, \pm 5, \pm 7, \pm 691\}.
$$
\end{thm}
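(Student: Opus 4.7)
The plan is to combine the multiplicativity of $\tau$ with Ramanujan's congruences and Lucas-sequence theory to reduce the theorem to finitely many explicit Diophantine equations, and then to dispatch these equations using the Chabauty-Coleman method for genus-$5$ hyperelliptic curves together with standard Thue-equation solvers.

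First I would exploit parity: using the Hecke recurrence $\tau(p^{k+1})=\tau(p)\tau(p^k)-p^{11}\tau(p^{k-1})$, one checks that $\tau(p^k)$ is odd if and only if $p$ is odd and $k$ is even, so by multiplicativity $\tau(n)$ is odd if and only if $n$ is an odd square. Writing $n=m^2=\prod_i p_i^{2a_i}$ with $m$ odd, we have $\tau(n)=\prod_i \tau(p_i^{2a_i})$, and each factor must divide $\alpha\in\{\pm 1,\pm 3,\pm 5,\pm 7,\pm 691\}$. This reduces the theorem to showing that $\tau(p^{2a})=c$ has no solution with $p$ an odd prime, $a\geq 1$, and $c$ a divisor of some such $\alpha$, together with a multi-prime compatibility check at the end.

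For each odd prime $p$, the sequence $U_k:=\tau(p^{k-1})$ is a Lucas sequence with parameters $(\tau(p),p^{11})$. The Bilu-Hanrot-Voutier primitive-divisor theorem then guarantees, outside a short list of exceptions, that $U_k$ has a primitive prime divisor once $k\geq 31$, effectively bounding the exponent $a$. Ramanujan's congruences prune the remaining cases further: for $\alpha=\pm 691$, the congruence $\tau(n)\equiv\sigma_{11}(n)\pmod{691}$ forces $691\mid\sigma_{11}(n)$, a stringent restriction on the primes $p_i$; for $\alpha=\pm 5$ and $\pm 7$, the mod $5$ and mod $7$ congruences impose divisibility conditions on $n\sigma_1(n)$ and $n\sigma_3(n)$ respectively. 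Each surviving case $\tau(p^{2a})=c$ then becomes, upon setting $x=p$ and $y=\tau(p)$, an explicit polynomial Diophantine equation: for $a=1$ it is the genus-$5$ hyperelliptic curve $y^2=x^{11}+c$, while for $a\geq 2$ the expression of $\tau(p^{2a})$ factors into cyclotomic-like pieces whose equation to divisors of $c$ yields Thue equations solvable by \texttt{PARI/GP} or \texttt{Magma}.

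The principal step is then attacking each genus-$5$ curve $y^2=x^{11}+c$ by the Chabauty-Coleman method: compute or bound the Mordell-Weil rank of its Jacobian, verify that it is strictly less than $5$, and use $p$-adic integration against vanishing differentials to enumerate all rational points, checking at the end that none produces a prime $x$ with $y=\tau(x)$. The main obstacle is exactly this rank computation and Chabauty-Coleman analysis: bounding the rank by $4$ may require an explicit $2$-descent or descent by an isogeny, and the $p$-adic integration on a high-genus curve is technically delicate. Secondary challenges include handling the small exceptional Lucas sequences missed by Bilu-Hanrot-Voutier and ensuring computational tractability of the Thue equations that arise when $a$ is not the smallest possible value.
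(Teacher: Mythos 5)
Your overall architecture matches the paper's: parity forces $n$ to be an odd square, multiplicativity plus the impossibility of $\tau(p^{2a})=\pm 1$ reduces everything to a single prime power, Lucas-sequence theory and Ramanujan's congruences cut the exponents down to a finite list, and the survivors are killed by Chabauty--Coleman and Thue solvers. But there is a genuine logical gap in the step where you claim that the Bilu--Hanrot--Voutier theorem, by guaranteeing a primitive prime divisor of $u_k$ for $k\geq 31$, ``effectively bounds the exponent $a$.'' It does not. If $\tau(p^{d})=u_{d+1}(\alpha_p,\beta_p)=\pm\ell$, the primitive prime divisor of $u_{d+1}$ can perfectly well be $\ell$ itself, so no bound on $d$ follows from BHV alone. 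The correct mechanism, which your write-up is missing, is: (i) by the divisibility $u_e\mid u_{d+1}$ for $e\mid (d+1)$ together with the absence of terms equal to $\pm 1$, the index $d+1$ must be an odd prime and $\tau(p^d)$ must be the \emph{first} term divisible by $\ell$; and (ii) the Ramanujan congruences compute that first index $m_\ell(p)$ explicitly (it is governed by the order of $p$, $p^3$, or $p^{11}$ modulo $\ell$, hence divides $\ell(\ell^2-1)$ in effect). This yields $d=2$ for $\ell=3$, $d=4$ for $\ell=5$, $d=6$ for $\ell=7$, and $d\in\{2,4,22,690\}$ for $\ell=691$. Note in particular $d=690$: your proposed bound $d+1\leq 30$ would silently discard this case, which is real and requires solving the degree-$345$ Thue equation $F_{690}(X,Y)=\pm 691$ (handled in the paper via the reduction $F_{p-1}(X,Y)=\widehat{F}_p(X,Y-2X)$ and the continued-fraction bounds of Bilu--Hanrot--Voutier). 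Without that case your proof is incomplete.

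Two smaller issues. First, your parity argument is circular as stated: the Hecke recurrence only propagates the parity of $\tau(p)$, and the fact that $\tau(p)$ is even for every prime $p$ already requires the mod $2$ identity $\Delta\equiv\sum_k q^{(2k+1)^2}\pmod 2$ (Jacobi triple product), which is how the paper does it. Second, for $a=2$ the form $F_4(X,Y)=Y^2-3XY+X^2$ is quadratic, so $F_4(p^{11},\tau(p)^2)=\pm\ell$ is a Pell-type equation with potentially infinitely many integer solutions, not a Thue equation that \texttt{PARI/GP} can solve; one must exploit that $X=p^{11}$ is an eleventh power, which is why the paper passes to the hyperelliptic curves $Y^2=5X^{22}\pm 4\ell$ and invokes either the Bugeaud--Mignotte--Siksek classification of perfect powers among Lucas numbers (for $\ell=5$) or Chabauty--Coleman (for $\ell=691$).
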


\begin{remark} The authors and Tsai have obtained more general  (and stronger) results  \cite{BCO} for newforms with trivial mod 2 residual Galois representations.
For $\tau(n)$ with $n>1$, we have proved (see Theorem 1.2 of \cite{BCO})  that  $$\tau(n)\not \in \{\pm 1, \pm 3, \pm 5, \pm 7, \pm 13, \pm 17, -19, \pm 23,  \pm 37, \pm 691\}.$$
Assuming GRH, we also show that $$\tau(n)\not \in 
\left \{ \pm  \ell\ : \ 41\leq  \ell\leq 97  \  {\text {\rm with}}\ \leg{\ell}{5}=-1\right\} \cup
\left \{-11, -29, -31, -41, -59, -61, -71, -79, -89\right\}.
$$
The proof of Theorem~\ref{Lehmer135} here is simplified by the knowledge of Ramanujan's congruences (\ref{RamanujanCongruences}).
\end{remark}

The proof of Theorem~\ref{Lehmer135} makes use of a number of important tools in concert with (\ref{RamanujanCongruences}).
The deep work of Bilu, Hanrot, and Voutier \cite{BHV} on primitive prime divisors of Lucas sequences forms the primary framework for the proof. Suppose that $\ell \in \{3, 5, 7, 691\}$ and that $\tau(n)=\pm \ell$.
Their theory, combined with (\ref{RamanujanCongruences}) and the multiplicativity of $\tau(n)$, implies that $n=p^{d-1}$, where $p$  is an odd prime, and $d\mid \ell (\ell^2-1)$ are certain odd primes. For $\ell \in \{3, 5, 7\}$, it turns out that one must have $d=\ell$.
The condition that
$$
\tau(p^{d-1})=\pm \ell
$$
implies the existence of a specific integer point on one of two algebraic curves determined by $d$. These 
curves are of hyperelliptic and Thue-type. The proof of Theorem~\ref{Lehmer135}
follows from the explicit determination of the integer points on these curves. This classification is achieved using the Chabauty--Coleman method \cite{coleman}
and the Bilu--Hanrot algorithm \cite{BH96} for solving Thue equations.

\section*{Acknowledgements} 
\noindent
The authors thank the referees, Matthew Bisatt, Michael Griffin, Guillaume Hanrot, Sachi Hashimoto, C\'eline Maistret, Drew Sutherland, Wei-Lun Tsai, and Charlotte Ure for their helpful comments during the preparation of this paper and \cite{BCO}. The authors are particularly grateful to Guillaume Hanrot and Wei-Lun Tsai who
offered assistance with various computer calculations.
The third author thanks the Centre di Ricerca Matematica Ennio De Giorgi (Pisa, Italy) and the organizers of the 2018 conference on modular forms and Drinfeld modules for their kind hospitality.

\section{Nuts and bolts}

The proof of Theorem~\ref{Lehmer135} requires facts about primitive prime divisors of Lucas sequences,
the Hecke multiplicative properties of $\tau(n)$, and certain arithmetic facts about specific hyperelliptic curves and Thue equations.
We record these facts in this section.

\subsection{Lucas sequences and their prime divisors}
We recall the important work of Bilu, Hanrot, and Voutier \cite{BHV} on Lucas sequences.
Suppose that $\alpha$ and $\beta$ are algebraic integers for which $\alpha+\beta$ and $\alpha \beta$
are relatively prime integers, where $\alpha/\beta$ is not a root of unity.
These algebraic integers generate a {\it Lucas sequence} $\{u_n(\alpha,\beta)\}=\{u_1=1, u_2=\alpha+\beta,\dots\},$  the integers
\begin{equation}
u_n(\alpha,\beta):=\frac{\alpha^n-\beta^n}{\alpha-\beta}.
\end{equation}

A prime  $\ell \mid u_{n}(\alpha,\beta)$ is a {\it primitive prime divisor of $u_n(\alpha,\beta)$} if $\ell \nmid (\alpha-\beta)^2 u_1(\alpha,\beta)\cdots u_{n-1}(\alpha, \beta)$.  
Those $u_n(\alpha,\beta),$ where $n>2$, without a primitive prime divisor are called {\it defective\footnote{We do not consider the absence of
a primitive prime divisor for $u_2(\alpha,\beta)=\alpha+\beta$ to be   a defect.} .}
In the most famous Lucas sequence, the Fibonacci numbers, the following underlined terms are defective:
$$
1, 1, 2, 3, 5, \underbar{8}, 13, 21, 34, 55, 89, \underbar{144}, 233, 377,\dots
$$
 (note. The terms  $F_1=1$ and $F_2=1$ are not defective as their indices do not exceed $2.$)
In 1913 Carmichael \cite{Carmichael} proved that 144 is the largest defective Fibonacci number.
Bilu, Hanrot, and Voutier \cite{BHV} proved the definitive result for all Lucas sequences.
They proved that every Lucas number $u_n(\alpha,\beta)$, with $n>30,$
has a primitive prime divisor.
Their work is even more impressive; it is sharp and comprehensive.
There are sequences for which $u_{30}(\alpha,\beta)$
is defective.  Their work, combined with a subsequent paper\footnote{This paper includes a few cases which were omitted in Tables 3 and 4 of \cite{BHV}.}
by Abouzaid \cite{Abouzaid},  gives the {\it complete classification} of
defective Lucas numbers.
Tables 1-4 in Section 1 of \cite{BHV} and Theorem 4.1 of \cite{Abouzaid} offer this
classification. Every defective Lucas number either belongs to a  finite list of sporadic examples, or
a finite list of parameterized infinite families.

To study $\tau(n)$, we make use of the following consequence of their classification.

\begin{lem}\label{Modularity}
Suppose that $\alpha$ and $\beta$ are roots of the monic quadratic integral polynomial
$$
F(X)=X^2-AX+p^{11}=(X-\alpha)(X-\beta),
$$
where $p$ is an odd prime, $|A|=|\alpha+\beta|\leq 2p^{\frac{11}{2}},$ and $\gcd(\alpha+\beta, p)=1.$ 
Then there are no defective Lucas numbers $\{u_n(\alpha,\beta)\}\in \{\pm 1, \pm \ell\},$ where $\ell$ is prime.
\end{lem}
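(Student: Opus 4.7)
The strategy is to apply the complete classification of defective Lucas numbers $u_n(\alpha,\beta)$ for $n>2$, due to Bilu--Hanrot--Voutier and completed by Abouzaid. By Tables 1--4 of \cite{BHV} together with Theorem 4.1 of \cite{Abouzaid}, every defective $u_n(\alpha,\beta)$ with $n>2$ arises either from an explicit finite list of sporadic examples (all with $n\leq 30$) or from one of finitely many parametric families, and in each case the pair $(P,Q) := (\alpha+\beta,\, \alpha\beta)$ is given in closed form. Consequently $u_n$ itself is a known polynomial in $(P,Q)$, which we may evaluate directly. The constraints $Q = p^{11}$ with $p$ an odd prime, $\gcd(P,p)=1$, and $|P|\leq 2p^{11/2}$ are very rigid and should cut the classification down to almost nothing.

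\textbf{First step: sporadic cases.} For each sporadic defective entry, read off the pair $(P,Q)$ and test whether $Q$ is an eleventh power of an odd prime. In the overwhelming majority of entries this fails outright, and the case is discarded. For the (few, if any) entries that survive, one checks $\gcd(P,p)=1$ and $|P|\leq 2p^{11/2}$, and then numerically evaluates $u_n$ to confirm $u_n\notin \{\pm 1,\pm \ell\}$ for every prime $\ell$.

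\textbf{Second step: parametric families.} For each infinite family, the equation $\alpha\beta = p^{11}$ becomes a Diophantine condition on the family's parameter(s): typically it forces the parameter to be $\pm 1$, a perfect eleventh power, or another very restricted shape, and the coprimality requirement $\gcd(P,p)=1$ eliminates additional branches. One then substitutes back into the explicit polynomial expression for $u_n$. The crucial structural point is that in each BHV/Abouzaid family $u_n$ factors nontrivially over $\Z[P,Q]$ (for example $u_4 = P(P^2-2Q)$ and $u_6 = P(P^2-Q)(P^2-3Q)$), so the requirement $|u_n|\in\{1,\ell\}$ forces all but one factor to be $\pm 1$, which combined with $Q=p^{11}$ reduces to a finite solvable system with no admissible solutions.

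\textbf{Main obstacle.} The principal difficulty is organizational bookkeeping through the BHV/Abouzaid classification rather than any single hard step: one must handle each family and each sporadic entry, keeping the Deligne-type bound $|P|\leq 2p^{11/2}$ and the primality of $p$ in play at every stage. The delicate subcases are the small indices $n\in\{3,4,5,6\}$, where $u_n$ has low degree in $(P,Q)$, the defective families are most numerous, and the factorizations of $u_n$ are least constraining; there one must combine the $p^{11}$ structure of $Q$ with the coprimality condition to knock out a handful of residual integer equations by direct inspection.
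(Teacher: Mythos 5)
Your overall strategy is the paper's: run the constraints $Q=\alpha\beta=p^{11}$, $\gcd(P,p)=1$, $|P|\leq 2p^{11/2}$ through the Bilu--Hanrot--Voutier tables together with Abouzaid's Theorem 4.1, discarding the sporadic entries outright and analyzing the parametric families case by case. But there is a genuine gap at the end of your second step. You assert that after substituting $Q=p^{11}$ each surviving family ``reduces to a finite solvable system with no admissible solutions'' that can be ``knocked out \dots by direct inspection.'' That is false for the most delicate residual case, which is exactly the one you flag as dangerous but do not actually resolve: $n=3$. There $u_3=P^2-Q=A^2-p^{11}$ does not factor, so your factorization argument ($u_4=P(P^2-2Q)$, etc.) gives nothing, and the condition $u_3=\pm\ell$ with the defectiveness constraints leaves (as the paper shows) $n=\ell=3$ and the equation $A^2=p^{11}\pm 3$, i.e.\ an integer point $(p,\pm A)$ on the hyperelliptic curves $Y^2=X^{11}+3$ or $Y^2=X^{11}-3$. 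These are Lebesgue--Nagell equations; their integral points are known only through substantial work (the paper cites \cite{BMS} and \cite{Cohn}), not by inspection. The first curve has only $(1,\pm 2)$, which fails to have odd prime $X$-coordinate, and the second has none --- that is the actual endgame of the proof, and your write-up has no mechanism to reach it.

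A second, smaller omission: the paper's case-by-case treatment of the parametrized families also invokes Mih\u{a}ilescu's theorem (Catalan's conjecture, that $2^3$ and $3^2$ are the only consecutive perfect powers) to rule out certain branches where the $p^{11}$ condition on $Q$ turns into a consecutive-perfect-powers equation. Your sketch of ``the parameter is forced to be $\pm 1$ or an eleventh power'' hides this: some branches survive that reduction and need Catalan, not elementary congruences, to die. So while the skeleton of your argument matches the paper, the two places where real input is needed --- Catalan for the families and the resolution of $Y^2=X^{11}\pm 3$ for the $n=3$ residue --- are precisely the places your proposal waves through.
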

\begin{proof}
The proof uses 
Tables 1-4 of \cite{BHV}  and Theorem 4.1 of \cite{Abouzaid}.  Using the assumption that $\alpha \beta$ is the $11$th power of an odd prime, one finds that
these
Lucas numbers are not among the sporadic defective examples. 

A straightforward case-by-case analysis of 
the parameterized infinite families,
using elementary congruences and the truth of Catalan's conjecture \cite{Catalan},
that $2^3$ and $3^2$ are the only consecutive perfect powers, leaves one type of possibility.
If $|u_n(\alpha,\beta)|=\ell$, where $\ell$ is prime, then $n=\ell=3$, and $\alpha+\beta=\pm m$, where
$(p,\pm m)$ is an integer point on one of the hyperelliptic curves
\begin{equation}\label{hyper1}
Y^2 =X^{11}+3 \ \ \ {\text {\rm or}}\ \ \ Y^2=X^{11}-3.
\end{equation}
The integer points on these curves are known (for example, see \cite{BMS, Cohn}).
The second curve has none, while the only integer points on the first are $(1, \pm 2)$, which is
not of the form $(p,\pm m)$.
\end{proof}

We require the following fundamental divisibility property for Lucas numbers.

\begin{prop}[Prop. 2.1 (ii) of \cite{BHV}]\label{PropA}  If $d\mid n$, then $u_d(\alpha, \beta) | u_n(\alpha,\beta).$
\end{prop}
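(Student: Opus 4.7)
The plan is to prove the divisibility directly from the closed-form expression $u_n(\alpha,\beta)=(\alpha^n-\beta^n)/(\alpha-\beta)$, by exhibiting an explicit integer cofactor relating $u_n$ to $u_d$. The only substantive issue is verifying that this cofactor, which a priori lies in the ring $\Z[\alpha,\beta]$, actually lies in $\Z$.

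First, I write $n=dk$ for some positive integer $k$. The telescoping identity $X^k-Y^k=(X-Y)\sum_{j=0}^{k-1}X^{j}Y^{k-1-j}$, applied with $X=\alpha^d$ and $Y=\beta^d$, gives
$$
\alpha^n-\beta^n=(\alpha^d-\beta^d)\sum_{j=0}^{k-1}\alpha^{jd}\beta^{(k-1-j)d}.
$$
Dividing both sides by $\alpha-\beta$ yields the factorization
$$
u_n(\alpha,\beta)=u_d(\alpha,\beta)\cdot S, \qquad S:=\sum_{j=0}^{k-1}\alpha^{jd}\beta^{(k-1-j)d}.
$$

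The main (small) obstacle is to argue that $S\in\Z$, since then the claimed divisibility $u_d(\alpha,\beta)\mid u_n(\alpha,\beta)$ follows at once in $\Z$. The substitution $j\mapsto k-1-j$ permutes the summands of $S$, so $S$ is invariant under the Galois automorphism swapping $\alpha$ and $\beta$. Hence $S$ is a symmetric polynomial in $\alpha,\beta$ with integer coefficients, and by the fundamental theorem of symmetric functions it is a polynomial in the elementary symmetric functions $\alpha+\beta$ and $\alpha\beta$, both of which are integers by the hypothesis on the Lucas pair $(\alpha,\beta)$. Thus $S\in\Z$, and the proposition follows.
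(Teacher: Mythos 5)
Your argument is correct and complete: the factorization $\alpha^{dk}-\beta^{dk}=(\alpha^{d}-\beta^{d})\sum_{j=0}^{k-1}\alpha^{jd}\beta^{(k-1-j)d}$ gives $u_n=u_d\cdot S$, and your observation that $S$ is a symmetric polynomial in $\alpha,\beta$ with integer coefficients --- hence an integer polynomial in $\alpha+\beta$ and $\alpha\beta$, which are integers for a Lucas pair --- correctly settles the only delicate point, namely that the cofactor lies in $\Z$ and not merely in $\Z[\alpha,\beta]$. The paper itself offers no proof, simply citing Proposition~2.1(ii) of \cite{BHV}, and your argument is the standard one given there; nothing is missing.
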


\subsection{Properties of $\tau(n)$}

Here we record properties enjoyed by Ramanujan's tau-function.
 These include the Hecke multiplicativity established
by Mordell \cite{Mordell}, and 
 the deep theorem of Deligne  \cite{Deligne1, Deligne2} that bounds $|\tau(p)|.$ 

\begin{thm}\label{Newforms} 
The following are true:
\begin{enumerate}
\item If $\gcd(n_1,n_2)=1,$ then $\tau(n_1 n_2)=\tau(n_1)\tau(n_2).$
\item If $p$ is prime and $m\geq 2$, then
$$
\tau(p^m)=\tau(p)\tau(p^{m-1}) -p^{11}\tau(p^{m-2}).
$$
\item If $p$ is prime and $\alpha_p$ and $\beta_p$ are roots of $F_p(X):=X^2-\tau(p)X+p^{11},$ then
$$
   \tau(p^m)=u_{m+1}(\alpha_p,\beta_p)=\frac{\alpha_p^{m+1}-\beta_p^{m+1}}{\alpha_p-\beta_p}.
$$   
Moreover, we have $|\tau(p)|\leq 2p^{\frac{11}{2}}$, and $\alpha_p$ and $\beta_p$ are complex conjugates.
\end{enumerate}
\end{thm}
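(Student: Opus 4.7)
The plan is to derive all three parts from two structural facts: that $S_{12}(\SL_2(\Z))$ is one-dimensional, so $\Delta$ is automatically a simultaneous Hecke eigenform, and Deligne's bound on its Hecke eigenvalues.

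First I would handle parts (1) and (2) simultaneously via the Hecke algebra. Since $\dim S_{12}(\SL_2(\Z)) = 1$, the form $\Delta$ satisfies $T_n \Delta = \lambda_n \Delta$ for every Hecke operator $T_n$; comparing the coefficient of $q$ on both sides forces $\lambda_n = \tau(n)$. Applying the standard double-coset identities $T_m T_n = T_{mn}$ for $\gcd(m,n) = 1$ and the weight-$12$ recursion $T_{p^m} = T_p T_{p^{m-1}} - p^{11} T_{p^{m-2}}$ to $\Delta$, and then reading off the coefficient of $q$, yields Mordell's multiplicativity (1) and the recursion (2). This part is purely formal once the Hecke formalism is in place.

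For the closed form in (3), I would observe that the recursion in (2) is a linear recurrence whose characteristic polynomial is exactly $F_p(X) = X^2 - \tau(p) X + p^{11}$. When the roots $\alpha_p, \beta_p$ are distinct, the solution space is two-dimensional and spanned by $m \mapsto \alpha_p^m$ and $m \mapsto \beta_p^m$. Substituting the initial conditions $\tau(p^0) = 1$ and $\tau(p) = \alpha_p + \beta_p$ pins down the coefficients and produces
$$
\tau(p^m) = \frac{\alpha_p^{m+1} - \beta_p^{m+1}}{\alpha_p - \beta_p} = u_{m+1}(\alpha_p, \beta_p).
$$

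The genuinely deep ingredient, and the only real obstacle, is the Ramanujan--Deligne bound $|\tau(p)| \leq 2 p^{11/2}$, which I would simply cite from \cite{Deligne1, Deligne2}; it follows from Deligne's proof of the Weil conjectures, applied to a Kuga--Sato variety whose $\ell$-adic \'etale cohomology realizes the Galois representation attached to $\Delta$. This bound is equivalent to $\tau(p)^2 - 4 p^{11} \leq 0$, so the discriminant of $F_p(X)$ is non-positive. Equality cannot occur, since $\tau(p)^2 = 4 p^{11}$ would force $p^{11/2}$ to be rational, impossible for a prime $p$. Hence $\alpha_p$ and $\beta_p$ are a pair of distinct complex conjugates of common modulus $p^{11/2}$, which both justifies the distinctness hypothesis needed for the closed form in (3) and supplies the final assertion of the theorem.
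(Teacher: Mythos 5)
Your proposal is correct and follows the standard route: the paper itself offers no proof of this theorem, simply recording part (1)--(2) as Mordell's Hecke multiplicativity and the bound as Deligne's theorem, which is exactly the content you derive via the one-dimensionality of $S_{12}(\SL_2(\Z))$ and then cite, respectively. Your handling of the remaining elementary points --- solving the linear recurrence with characteristic polynomial $F_p(X)$ and ruling out a repeated root because $p^{11/2}$ is irrational while $\tau(p)$ is an integer --- is sound and fills in precisely what the paper leaves implicit.
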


\subsection{Integer Points on certain hyperelliptic curves and Thue curves}\label{IntegerPoints}
To prove Theorem~\ref{Lehmer135}, we require knowledge of the integer points on certain hyperelliptic curves and Thue equations.
Here we include the information we require in the following two subsections.

\subsubsection{Some hyperelliptic curves}
For $d\geq 2,$ we define the hyperelliptic curves
\begin{equation}
H^{\pm}_{d,\ell}:  \ Y^2 =5 X^{2d}\pm 4\ell \ \ \ \ {\text {\rm and}}\ \ \ \ C^{\pm}_{d,\ell}: \ Y^2=X^{2d-1}\pm \ell.
\end{equation}
The following satisfying lemma classifies the integer points on $H_{d,5}^{\pm}.$
\begin{lem}\label{AnnalsCorollary}
The folllowing are true.
\begin{enumerate}
\item If $d=2$ and $\ell=5$, then the only integer points on $H^{+}_{2,5}$ are $(\pm 1, \pm 5)$ and $(\pm 2, \pm 10)$.
\item If $d>2,$ then the only integer points on $H^{+}_{d,5}$ are $(\pm 1, \pm 5).$
\item If $d\geq 2,$ then $H^{-}_{d,5}$ has no integer points.
\end{enumerate}
\end{lem}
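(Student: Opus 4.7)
The plan is to reduce each case to a classical Pell-type equation and then invoke the known classification of perfect powers among Lucas numbers. First, I would observe that any integer point $(X,Y)$ on $H^{\pm}_{d,5}$ satisfies $5\mid Y$; writing $Y=5y$, the defining equation collapses to $(X^d)^2 - 5y^2 = \mp 4$. Setting $u := X^d$, this is the norm equation for units in $\Z[(1+\sqrt{5})/2]$ (the congruence $u\equiv y\pmod{2}$ is automatic), so its integer solutions are precisely $(u,y)=(\pm L_n,\pm F_n)$ for $n\in\Z$, where $L_n$ and $F_n$ are the Lucas and Fibonacci numbers, and the sign of $u^2-5y^2$ equals $(-1)^n$. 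Hence points on $H^{+}_{d,5}$ correspond to representations $X^d=L_n$ with $n$ odd, while points on $H^{-}_{d,5}$ correspond to $X^d=L_n$ with $n$ even.

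Since the exponent $2d$ is even, I may assume $X\geq 0$, so $u=X^d\geq 0$, and the problem reduces to: for which $d\geq 2$ is a Lucas number a nontrivial $d$-th power? At this point I would invoke the theorem of Bugeaud, Mignotte, and Siksek stating that the only perfect powers (of exponent $\geq 2$) among the Lucas numbers are $L_1=1$ and $L_3=4$; for the case $d=2$ this is Cohn's classical result on square Lucas numbers.

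Since both $L_1=1$ and $L_3=4$ have odd index, part (3) follows immediately: no integer points exist on $H^{-}_{d,5}$. For part (2) with $d>2$, the square $L_3=2^2$ is ruled out, so the only admissible solution is $L_n=1$, which forces $X=1$, $y=F_1=1$, $Y=\pm 5$, giving exactly the points $(\pm 1,\pm 5)$. For part (1) with $d=2$, both $L_1=1$ and $L_3=4=2^2$ contribute, producing $X\in\{\pm 1,\pm 2\}$ with respective $y=F_1=1$ and $y=F_3=2$, and hence precisely the listed points.

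The principal obstacle is the Bugeaud--Mignotte--Siksek theorem itself, whose proof combines Baker-type bounds with the modular method and is highly nontrivial; however, it is invoked here purely as a black box, so no additional deep machinery needs to be developed inside the argument. A secondary bookkeeping point, which I would handle carefully but briefly, is the translation of the signs of $X$ and $Y$ back into the full list of integer points, using the evenness of $2d$ and the fact that $Y^2$ determines $Y$ only up to sign.
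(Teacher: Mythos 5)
Your proposal is correct and follows essentially the same route as the paper: both reduce the equation $Y^2=5X^{2d}\pm 20$ to the Pell-type identity $L_n^2-5F_n^2=4(-1)^n$, so that integer points force $|X|^d$ to be a Lucas number of the appropriate parity of index, and both then invoke the Bugeaud--Mignotte--Siksek theorem that $L_1=1$ and $L_3=4$ are the only perfect powers among the Lucas numbers. Your explicit derivation via the unit group of $\Z[(1+\sqrt{5})/2]$ is just a more detailed writeup of the Pell step the paper states directly.
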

\begin{proof} 
We recall the classical Lucas sequence $$\{L_n\}=\{2, 1, 3, 4, 7, 11, 18, 29, 47,76, 123, 199, 322, 521, 843,\dots\},$$ defined by
$L_0:=2$ and $L_1:=1$ and the recurrence $L_{n+2}:=L_{n+1}+L_n$ for $n\geq 0$.
Bugeaud, Mignotte, and Siksek \cite{AnnalsFibonacci} proved that $L_1=1$ and $L_3=4$
are the only perfect power Lucas numbers.
By the theory of Pell's equations, the positive integer $X$-coordinate solutions to
$$ Y^2 = 5X^2 + 20 \ \ \ \ {\text {\rm and}}\ \ \ \ Y^2=5X^2-20,$$
namely $\{L_1=1, L_3= 4,L_5=11,L_7= 29,\dots\}$ and
$\{L_0=2, L_2=3, L_4=7, L_6=18,\dots\}$ respectively, split the Lucas numbers. The three claims follow immediately.
\end{proof}

The following satisfying lemma classifies the integer points on $H_{11,691}^{+}$ and $C_{6,691}^{+}.$

\begin{lem}\label{Plus691}
There are no integer points on $C^{+}_{6,691}$ and $H^{+}_{11,691}.$
\end{lem}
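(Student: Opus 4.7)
The plan is to apply the Chabauty--Coleman method to each curve separately. The curve $C^{+}_{6,691}: Y^2 = X^{11}+691$ is hyperelliptic of genus $g=5$, while $H^{+}_{11,691}: Y^2 = 5X^{22}+4\cdot 691$ is hyperelliptic of genus $g=10$. Since Chabauty--Coleman requires the Mordell--Weil rank $r$ of the Jacobian to satisfy $r<g$, the strategy is to reduce both problems to genus-$5$ curves where a descent confirms this inequality, then invoke Coleman integration at a convenient prime of good reduction to cut out the rational points in each residue disk, and finally check that none of the surviving rational points give an integer solution.

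For $C^{+}_{6,691}$ I would work directly with the given model. First, I would perform a $2$-descent on the Jacobian $J$ (for instance, using the hyperelliptic machinery in \texttt{Magma}) aiming to bound the rank from above by $r \le 4 < 5 = g$. Next, I would pick a small prime $p$ of good reduction (avoiding $2, 11, 691$), compute the reduction over $\mathbb{F}_p$, and use its small $\mathbb{F}_p$-point set to organize residue disks. I would then fix a basis $\omega_1,\dots,\omega_5$ of regular differentials, solve for the $g-r$ independent $1$-forms annihilating the $p$-adic closure of $J(\mathbb{Q})$, and count zeros of their Coleman integrals disk by disk to obtain an upper bound on $\#C^{+}_{6,691}(\mathbb{Q})$. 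Since only the point at infinity is apparent, the bound should close the argument, after which I check directly that no $X$-residue in a surviving disk produces $X^{11}+691$ equal to a perfect square.

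For $H^{+}_{11,691}$ the genus is too large for direct Chabauty--Coleman, but the involution $X\mapsto -X$ provides a degree-$2$ map to the auxiliary hyperelliptic curve $\widetilde{H}:Y^2 = 5U^{11}+2764$ (with $U=X^2$), which has genus $5$. Every integer point on $H^{+}_{11,691}$ yields an integer point on $\widetilde{H}$ with $U$ a nonnegative perfect square. I would run the same Chabauty--Coleman pipeline on $\widetilde{H}$: bound the Mordell--Weil rank of its Jacobian via $2$-descent, choose a good prime, perform Coleman integration to enumerate all rational points, and then retain only those with $U\ge 0$ and $U$ a perfect square. Finally I would verify for each surviving candidate that it does not give an integer $Y$.

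The main obstacle is controlling the Mordell--Weil ranks of these two genus-$5$ Jacobians. Two-descent produces an upper bound via the Selmer group which may not be sharp; if it exceeds $4$, one would have to use a sharper technique, such as elliptic Chabauty on an isogenous quotient, a descent via a higher-degree \'etale cover, or the Mordell--Weil sieve combined with Coleman integration on quotients. Provided the rank bound is favorable, the residue-disk analysis is computationally delicate because of the large prime $691$ appearing in the defining equations, but the Coleman integrals should separate cleanly enough to force the rational point set to consist only of the point(s) at infinity, yielding the desired nonexistence of integer solutions.
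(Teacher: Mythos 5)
Your proposal follows essentially the same route as the paper: a $2$-descent rank bound (the actual rank turns out to be $0$) followed by Chabauty--Coleman on $C^{+}_{6,691}$ directly, and on the genus-$5$ quotient $Y^2=5U^{11}+4\cdot 691$ of $H^{+}_{11,691}$ obtained via $U=X^2$, exactly as in the paper (which uses $p=3$ and finds only non-rational $3$-adic points). The plan is correct and matches the paper's argument.
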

\begin{proof}
We carry out the Chabauty--Coleman method \cite{coleman} to determine the integral points on these curves. 

The genus 5 curve $C^{+}_{6,691}$ has Jacobian with Mordell-Weil rank 0, as can be found using the implementation of 2-descent in \texttt{Magma} \cite{magma}. Since the rank is less than the genus, we may apply the Chabauty--Coleman method, which, in this case, gives a 5-dimensional space of regular 1-forms vanishing on rational points. We take as our basis for the space of annihilating differentials the set $\{\omega_i := X^i \frac{dX}{2Y}\}_{i = 0, 1, \ldots, 4}.$ The prime $p = 3$ is a prime of good reduction for $C^{+}_{6,691}$, and taking the point at infinity $\infty$ as our basepoint, we compute the set of points $$\left\{z \in C^{+}_{6,691}(\Z_3): \int_{\infty}^z \omega_i = 0\;\textrm{for all}\;{i = 0, 1, \ldots, 4}\right\},$$ where the integrals are Coleman integrals computed\footnote{\texttt{SageMath} code used in this paper can be found in \cite{github}.} using \texttt{SageMath} \cite{sage}. This set, by construction, contains the set of integral points on the working affine model of $C^{+}_{6,691}$. 

The computation gives three points: two points with $X$-coordinate 0 and a third point with $Y$-coordinate 0 in the residue disk of $(2,0)$. (Indeed, the power series corresponding to the expansion of the integral of $\omega_0$ has each of these points occurring as simple zeros.)  We conclude that are no integral points on $C^{+}_{6,691}$.

To compute integral points on $H^{+}_{11,691}$, we reduce to considering integral points on the curve $Y^2 = 5X^{11} + 4 \cdot 691$ and then pull back any points found using the map $(X,Y) \rightarrow (X^2,Y)$. Using \texttt{Magma}, we find that the rank of the Jacobian of this genus 5 curve is 0. We rescale variables to work with the monic model $Y^2 = X^{11}+4\cdot 5^{10}\cdot 691$ and run the Chabauty--Coleman method using $p = 3$. As before, the computation gives three points with coordinates in $\Z_3$: two points with $X$-coordinate 0 and a third point with $Y$-coordinate 0 in the residue disk of $(2,0)$. As before, the power series corresponding to the expansion of the integral of $\omega_0$ has each of these points occurring as simple zeros.  None of these points are rational, and thus we conclude that there are no integral points on $H^{+}_{11,691}$.
\end{proof}

In contrast to the algebraic method used to establish Lemma~\ref{Plus691}, we 
show that there are no integer points on $H_{11,691}^{-}$ and $C_{6,691}^{-}$
using the classical analytic method of Thue equations.\footnote{We could have used the Thue method to provide an alternate proof of Lemma~\ref{Plus691}.} We  use the classical fact that these hyperelliptic equations can be reduced to the setting of Thue equations. A {\it Thue equation} is an equation of the form
$$F(X,Y)=m,
$$
where $F(X,Y)\in \Z[X,Y]$ is homogeneous and $m$ is a non-zero integer.
Thanks to work of Bilu and Hanrot \cite{BH96}, many of these equations can be effectively
solved using software packages such as \texttt{PARI/GP} \cite{pari} and \texttt{Magma}.

\begin{lem}\label{Minus691}
There are no integer points on $C^{-}_{6,691}$ and $H^{-}_{11,691}.$
\end{lem}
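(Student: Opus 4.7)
My plan is to follow the classical Mordell--Thue reduction: factor each equation over the imaginary quadratic field $K = \Q(\sqrt{-691})$, reduce the resulting ideal identity to a binary Thue equation of degree $11$, and then invoke the Bilu--Hanrot algorithm \cite{BH96} as implemented in \texttt{PARI/GP} or \texttt{Magma} to enumerate its integer solutions.

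For $C^{-}_{6,691}\colon Y^2 = X^{11}-691$, I would write
$$
(Y+\sqrt{-691})(Y-\sqrt{-691})=X^{11}
$$
in $\mathcal{O}_K$. Since $-691 \equiv 1\pmod 4$, we have $\mathcal{O}_K=\Z[\omega]$ with $\omega=(1+\sqrt{-691})/2$. Any common ideal divisor of the two factors on the left divides $(2\sqrt{-691})$, so a short congruence analysis at $2$ and $691$ reduces to the coprime case, in which $(Y+\sqrt{-691})=\mathfrak{a}^{11}$ for some ideal $\mathfrak{a}$. A direct computation via the Minkowski bound confirms that $11\nmid h(K)$, which forces $\mathfrak{a}$ to be principal; since $\mathcal{O}_K^{\times}=\{\pm 1\}$ and both units are eleventh powers, we obtain
$$
Y+\sqrt{-691} = (a+b\omega)^{11}
$$
for some $a,b\in\Z$. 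Expanding the right-hand side in the $\Z$-basis $\{1,\sqrt{-691}\}$ and matching the coefficient of $\sqrt{-691}$ produces a binary form $F(a,b)\in\Z[a,b]$ of degree $11$ and a Thue equation $F(a,b)=1$. Its integer solutions are found algorithmically, and one then checks directly that none yields an integer point $(X,Y)$ on $C^{-}_{6,691}$.

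For $H^{-}_{11,691}\colon Y^2 = 5X^{22}-4\cdot 691$, substitute $Z=X^2$ to obtain $Y^2+4\cdot 691 = 5Z^{11}$ and factor again in $\mathcal{O}_K$:
$$
(Y+2\sqrt{-691})(Y-2\sqrt{-691}) = 5Z^{11}.
$$
Since $\leg{-691}{5}=\leg{4}{5}=1$, the prime $5$ splits in $K$ as $(5)=\mathfrak{p}_5\bar{\mathfrak{p}}_5$. Distributing this split prime between the two factors on the left and handling the bad primes above $2$ and $691$ as before leads to an analogous Thue equation of degree $11$. Its solutions are enumerated by the same method, and the additional constraint that $Z$ must be a perfect square rules out every remaining candidate.

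The main technical obstacle is the ideal-theoretic bookkeeping at the bad primes (here $\{2,691\}$ and $\{2,5,691\}$) together with the verification that $11\nmid h(K)$, so that the resulting Thue equation is written down with precisely the correct coefficients. Once that setup is in place, the resolution of the Thue equation is a routine call to Bilu--Hanrot in \texttt{PARI/GP} or \texttt{Magma}, and the final verification that no solution lifts to an integer point on the original curve is a finite check.
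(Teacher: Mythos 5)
Your proposal matches the paper's proof: the authors invoke precisely this Lebesgue--Ramanujan--Nagell reduction (packaged as Theorem~2.1 of \cite{Barros} and Proposition~3.1 of \cite{BMS}) over $\Q(\sqrt{-691})$, which has class number $5$, and then dispatch the resulting Thue equations with the Bilu--Hanrot solver in \texttt{PARI/GP} and \texttt{SageMath}. The only cosmetic difference is that for $H^{-}_{11,691}$ the paper keeps $y=X$ and solves a single degree-$22$ Thue equation directly, rather than substituting $Z=X^2$ to get a degree-$11$ equation and imposing the square condition afterwards.
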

\begin{proof}
Generalized Lebesgue--Ramanujan--Nagell equations are Diophantine equations of the form
\begin{equation}\label{GLRN}
x^2+D=Cy^n,
\end{equation}
where $D$ and $C$ are non-zero integers. An integer point on (\ref{GLRN}) can be studied in the ring of integers
of $\Q(\sqrt{-D})$
using the factorization
$$(x+\sqrt{-D})(x-\sqrt{-D}) =Cy^n.
$$

This observation is a standard tool in the study of Thue equations. In particular, 
Theorem~2.1 of  \cite{Barros} (also see Proposition~3.1 of \cite{BMS}) gives a step-by-step algorithm that takes alleged solutions of (\ref{GLRN}) and produces integer points on one
of finitely many Thue equations constructed from $C, D$ and $n$ via the algebraic number theory of $\Q(\sqrt{-D})$.
 These equations are assembled from the knowledge of the group of units and the
ideal class group.

The  hyperelliptic curve $C^{-}_{6,691}$ corresponds to (\ref{GLRN}) for the class number 5 imaginary quadratic field
$\Q(\sqrt{-691})$, where $x=Y, y=X, C=1, D=691,$ and $n=11.$  In this case the algorithm gives exactly one Thue equation, which after clearing denominators, can be rewritten as
\small
\begin{displaymath}
\begin{split}
2\times5^{55}&=(991077174272090396)x^{11} + (119700018439220789119)x^{10}y
- (8831599221002836172345)x^9y^2\\
&\ \ \ \ -(337116345512786456280840)x^8y^3 
+ (8492967300375371034332430)x^7y^4\\
&\ \ \ \  + (175189311986919278870504298)x^6y^5 
- (1881807368163995585644810248)x^5y^6\\
&\ \ \ \  - (22992541672786450593030038430)x^4y^7 
+ (104772541553739359102253613965)x^3y^8\\
&\ \ \ \  + (697875798749922445133117312720)x^2y^9 
- (1068801486169809452619368218519)xy^{10}\\
&\ \ \ \  - (2292300374810647823111384294421)y^{11}.
\end{split}
\end{displaymath}
\normalsize
The Thue equation solver in \texttt{PARI/GP}, which implements the Bilu--Hanrot algorithm, establishes that there are no integer solutions, and so $C^{-}_{6,691}$ has no integer points.

We now turn our attention to the hyperelliptic curve $H^{-}_{11,691}.$  Its integer points $(X,Y)$  satisfy
$$
(Y+2\sqrt{-691})(Y-2\sqrt{-691})=5X^{22}.
$$
Therefore, we again employ the imaginary quadratic field $\Q(\sqrt{-691})$. In particular, we have (\ref{GLRN}), where $x=Y, y=X, C=5, D=4\cdot 691$ and $n=22$.
 The algorithm again gives one Thue equation, which after clearing denominators, can be rewritten as
\small 
\begin{displaymath}
\begin{split}
2^2\times5^{110}&=
-(20587212586465949627980680671826599752) x^{22} \\
&\ \  \ \   + (1133274396835827658613802749227310922394) x^{21} y\\ 
&\ \ \ \ +\cdots\\
 &\ \ \ \  -(79670423145107301772779399379735976309907264511718034789276856) x y^{21}\\
&\ \ \ \  +(71809437208138431262783549625248617351731199323326115439324273)y^{22}.
 \end{split}
 \end{displaymath}
\normalsize
The Thue solver in \texttt{SageMath} establishes that there are no integer solutions, and so $H^{-}_{11,691}$ has no integer points.

\end{proof}

\subsubsection{Some Thue equations}
We require Thue equations that arise
from the generating function
\begin{equation}\label{genfunction}
\frac{1}{1-\sqrt{Y}T+XT^2}=\sum_{m=0}^{\infty}F_m(X,Y)\cdot T^{m}=1+\sqrt{Y}\cdot T+(Y-X)T^2+\cdots.
\end{equation}
For every positive integer $m$, it is simple to verify that
\begin{equation}\label{ThueEqn}
F_{2m}(X,Y)=\prod_{k=1}^m \left(Y-4X \cos^2\left(\frac{\pi k}{2m+1}\right)\right).
\end{equation}
The first few homogenous polynomials $F_{2m}(X,Y)$ are as follows:
\begin{displaymath}
\begin{split}
F_2(X,Y)&=Y-X,\\
F_4(X,Y)&=Y^2-3XY+X^2\\
F_6(X,Y)&=Y^3-5XY^2+6X^2Y-X^3.\\
\end{split}
\end{displaymath}

We require the following lemma about six Thue equations arising from these polynomials.
\begin{lem}\label{Monster} The following are true.
\begin{enumerate}
\item The points $(\pm 1, \pm 4), (\pm 2, \pm 1), (\mp3, \mp 5)$ are the only integer solutions to
$$
F_6(X,Y)=\pm 7.
$$
\item There are no integer solutions to
$$
F_{22}(X,Y)=\pm 691.
$$
\item The points $(\pm 1, \pm 4)$ are the only integer solutions to
$$
F_{690}(X,Y)=\pm 691.
$$
\end{enumerate}
\end{lem}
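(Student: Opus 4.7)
The plan is to apply the Bilu--Hanrot algorithm \cite{BH96} for Thue equations, as implemented in \texttt{PARI/GP} and \texttt{Magma}, to each of the six Thue equations listed in (1)--(3). As a first step, I would record the claimed solutions by direct substitution. For part (1), one verifies $F_6(1,4)=64-80+24-1=7$, $F_6(2,1)=1-10+24-8=7$, and $F_6(-3,-5)=-125+375-270+27=7$, with the remaining sign partners following from the homogeneity identity $F_6(-X,-Y)=-F_6(X,Y)$. For part (3), the classical identity $\prod_{k=1}^{m} 4\sin^2\!\left(\pi k/(2m+1)\right)=2m+1$, combined with (\ref{ThueEqn}), gives $F_{2m}(1,4)=2m+1$; in particular $F_{690}(1,4)=691$, while $(-1,-4)$ solves $F_{690}(X,Y)=-691$ by the same sign symmetry.

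To exclude all other integer solutions, I would use that by (\ref{ThueEqn}) the polynomial $F_{2m}(1,Y)$ splits completely in the maximal totally real subfield $K_m\subset\Q(\zeta_{2m+1})$ of degree $m$. For part (1), $K_3$ is the real cubic subfield of $\Q(\zeta_7)$; having trivial class group and a small regulator, the resulting cubic Thue equation $F_6(X,Y)=\pm 7$ is dispatched routinely by the built-in Thue solver. For part (2), one works in the degree 11 totally real subfield $K_{11}\subset\Q(\zeta_{23})$, computing fundamental units and class group data in advance, so that the Bilu--Hanrot routine then confirms that $F_{22}(X,Y)=\pm 691$ has no integer solutions. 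In each case, it is essential that $691$ is fully accounted for by the norm from the ring of integers of $K_m$, so that only finitely many residue classes of $(X,Y)$ need be examined.

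The principal obstacle is part (3). Here $F_{690}(X,Y)$ has degree 345 and the associated number field is the maximal totally real subfield of $\Q(\zeta_{691})$, of degree 345, so that a direct invocation of the Bilu--Hanrot algorithm sits at the very edge of feasibility: even certifying fundamental units and class group data for a field of this size is a substantial undertaking. I would therefore proceed with a tailored computation (the authors' acknowledgement of Guillaume Hanrot's assistance suggests precisely such expert input), seeding the Thue solver with precomputed unit and class group information for this cyclotomic subfield and appealing where possible to local obstructions at small primes to shrink the enumeration. The resulting computation should confirm that the only integer solutions to $F_{690}(X,Y)=\pm 691$ are the pair $(\pm 1, \pm 4)$ already identified.
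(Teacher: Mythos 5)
Your verification of the claimed solutions is correct, and your treatment of parts (1) and (2) matches the paper: both are low-degree Thue equations (degrees 3 and 11 after dehomogenizing appropriately) that the \texttt{PARI/GP} solver handles directly. The genuine gap is in part (3). You propose to run the Bilu--Hanrot algorithm on the degree 345 Thue equation itself, ``seeding'' it with precomputed unit and class group data for the maximal totally real subfield of $\Q(\zeta_{691})$. That field has degree 345, and unconditionally certifying its unit group and class group is not a matter of expert tuning --- it is computationally out of reach (even the plus-part class number of $\Q(\zeta_{691})$ is not accessible by direct computation). Acknowledging that the computation ``sits at the very edge of feasibility'' and gesturing at local obstructions does not close this; you would need a concrete mechanism for bounding and then enumerating the solutions, and none is supplied.

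The paper's route avoids the degree 345 field entirely. The substitution $F_{p-1}(X,Y)=\widehat{F}_p(X,Y-2X)$ converts $F_{690}(X,Y)=\pm 691$ into the specific equation $\widehat{F}_{691}(X,Y)=\pm 691$ already analyzed by Bilu--Hanrot--Voutier: their Corollary 6.6 shows there are no solutions with $|X|>e^8$ when $31\leq p\leq 787$, so only a bounded range of $X$ remains. Midsize solutions must arise from convergents of the continued fraction expansions of the numbers $2\cos(2\pi k/691)$ (the standard reduction step, as in Tzanakis--de Weger and Proposition 2.2.1 of Bilu--Hanrot), and a direct calculation eliminates these; the small solutions with $|X|\leq 4$ are checked by hand, yielding exactly $(\pm 1,\pm 2)$ for $\widehat{F}_{691}$, i.e.\ $(\pm 1,\pm 4)$ for $F_{690}$. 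The key idea you are missing is this change of variables linking $F_{690}$ to the BHV family, which imports their large-solution bound for free instead of redoing the hardest part of the Thue algorithm in a degree 345 number field.
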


\begin{proof} Claims (1) and (2) are easily obtained using the Thue solver in \texttt{PARI/GP}.

At first glance, the proof of (3) seems far more formidable, as $F_{690}(X,Y)$ is a degree 345 homogeneous polynomial.
However, for odd primes $p$, the Thue equations $F_{p-1}(X,Y)=\pm p$
 are essentially the well-studied equations
\begin{equation}\label{ModifiedThue}
\widehat{F}_p(X,Y)=\prod_{k=1}^{\frac{p-1}{2}}\left(Y-2X\cos\left(\frac{2\pi k}{p}\right)\right)=\pm p
\end{equation}
that starred in the work of Bilu, Hanrot, and Voutier on primitive prime divisors of Lucas sequences. 
Indeed, we have that $F_{p-1}(X,Y)=\widehat{F}_p(X,Y-2X).$
A key step 
(see Cor. 6.6 of \cite{BHV}) in their work is that  there are no integer solutions to (\ref{ModifiedThue})
with $|X|>e^8$  when $31\leq p\leq 787.$ By a standard lemma (for example, see Lemma~1.1 of \cite{TW} and Proposition 2.2.1 of \cite{BH96})), midsize solutions of $\widehat{F}_{691}(X,Y)=\pm 691$  correspond to convergents of the continued fraction expansion of some
$2\cos(2\pi k/691).$  A simple calculation rules out this possibility, leaving only potential small solutions, those  with
 $|X|\leq 4$.  For these $X$ we find the solutions $(\pm 1, \pm 2)$, which implies that
$(\pm 1, \pm 4)$ are  indeed the only integral solutions to $F_{690}(X,Y)=\pm 691.$
\end{proof}

\section{Proof of Theorem~\ref{Lehmer135}}
It is well-known that $\tau(n)$ is odd if and only if $n$ is an odd square. 
To see this, we employ 
 the Jacobi Triple Product identity to obtain the congruence
\begin{displaymath}
\begin{split}
\sum_{n=1}^{\infty}\tau(n)q^n
&:=q\prod_{n=1}^{\infty}(1-q^n)^{24}
\equiv q\prod_{n=1}^{\infty}(1-q^{8n})^3\pmod 2\\
&\;=\sum_{k=0}^{\infty} (-1)^k(2k+1)q^{(2k+1)^2}.
\end{split}
\end{displaymath}

We consider the possibility that $\pm 1$ appears in sequences of the form
\begin{equation}\label{primepowers}
\{\tau(p),\tau(p^2), \tau(p^3),\dots\}.
\end{equation}
By Theorem~\ref{Newforms} (2), if  $p\mid \tau(p)$ is prime, then $p^m\mid \tau(p^m)$ for every $m\geq 1$, and
so $|\tau(p^m)|\neq 1.$  
For primes
$p\nmid \tau(p),$ Theorem~\ref{Newforms} (3) gives a Lucas sequence 
satisfying Lemma~\ref{Modularity}, which in turn implies that there are no defective terms with
$u_{m+1}(\alpha_p,\beta_p)=\tau(p^m)=\pm 1$. Therefore,
all of the values in (\ref{primepowers}) always have a prime divisor, and so cannot have absolute value 1.

We now turn to the primality of absolute values of $\tau(n)$.
Thanks to Hecke multiplicativity (i.e. Theorem~\ref{Newforms} (1)) and the discussion above,  if $\ell$ is an odd prime and $|\tau(n)|=\ell$, then $n=p^d$, where $p$ is an odd prime for which
 $p\nmid \tau(p).$ The fact that $\tau(p^d)=u_{d+1}(\alpha_p,\beta_p)$ leads to a further constraint on $d$ 
(i.e. refining 
 the fact that $d$ is even).
By Proposition~\ref{PropA}, which guarantees relative divisibility between Lucas numbers, and 
 Lemma~\ref{Modularity}, which guarantees the absence of defective terms in (\ref{primepowers}), it follows that 
$d+1$ must be an odd prime, and $\tau(p^d)$ is the very first term that is divisble by $\ell$.
To make use of this observation, for odd primes 
$p$ and $\ell$ we define
\begin{equation}
m_{\ell}(p):=\min\{ n\geq 1\ : \ \tau(p^n)\equiv 0\!\!\!\!\pmod{\ell}\}.
\end{equation}
For $|\tau(p^d)|=\ell$, we must have $m_{\ell}(p)=d,$ where $d+1$ is also an odd prime.

Thanks to the mod 3 congruence in (\ref{RamanujanCongruences}), we find that
$$
m_3(p)=\begin{cases} 1 \ \ \ \ \ &{\text {\rm if }} p=0, 2\!\!\!\!\pmod 3,\\
2\ \ \ \ \ &{\text {\rm if }} p\equiv 1\!\!\!\!\pmod 3.
\end{cases}
$$
Therefore, $d=2$ is the only possibility.
By Theorem~\ref{Newforms} (2), if $\tau(p^2)=\pm 3$, then
 $(p,\tau(p))\in C^{\pm}_{6,3}(\Z)$.
 However, recall that in (\ref{hyper1}) we used the fact that there are no such integer points.
 
Thanks to the mod 5 congruence in (\ref{RamanujanCongruences}), we find that
$$
m_5(p)=\begin{cases} 1 \ \ \ \ \ &{\text {\rm if }} p\equiv 0, 4\!\!\!\!\pmod{5}\\
3 \ \ \ \ \ &{\text {\rm if }} p\equiv 2, 3\!\!\!\!\pmod{5}\\
4 \ \ \ \ \ &{\text {\rm if }} p\equiv 1\!\!\!\!\pmod{5},
\end{cases}
$$
and so we only need to consider $d=4.$
By Theorem~\ref{Newforms} (2), if $\tau(p^4)=\pm 5$, then
 $(p, 2\tau(p)^2-3p^{11})\in H^{\pm}_{11,5}(\Z)$.
Lemma~\ref{AnnalsCorollary} (2) and (3) show that no such points exist.

Thanks to the mod 7 congruence in (\ref{RamanujanCongruences}), we find that
$$
m_7(p)=\begin{cases}
1 \ \ \ \ \ &{\text {\rm if }} p\equiv 0, 3, 5, 6\!\!\!\!\pmod{7}\\
6 \ \ \ \ \ &{\text {\rm if }} p\equiv 1, 2, 4\!\!\!\!\pmod{7}.
\end{cases}
$$
Hence, $d=6$ is the only possibility, and so we must rule out the possibility that $\tau(p^6)=\pm 7$.
Thanks to (\ref{genfunction}) and Theorem~\ref{Newforms} (3), 
for every $m\geq 1$ we have
$$
F_{2m}(p^{11},\tau(p)^2)=\tau(p^{2m}).
$$
Lemma~\ref{Monster} (1) shows that there are no such solutions to $F_6(X,Y)=\pm 7.$

Thanks to the mod 691 congruence in (\ref{RamanujanCongruences}), we find that
the only cases where $m_{691}(p)=d$, where $d+1$ is an odd prime, are $d=2, 4, 22,$ and $690$.
By Lemma~\ref{Monster} (2) and (3), the latter two cases, which correspond to
$$
\tau(p^{22})=F_{22}(p^{11},\tau(p)^2)=\pm 691 \ \ \ {\text {\rm and}}\ \ \ 
\tau(p^{690})=F_{690}(p^{11},\tau(p)^2)=\pm 691,
$$
have no such solutions.  If $\tau(p^2)=\pm 691,$ then $(p,\tau(p))\in C^{\pm}_{6,691}(\Z)$.
If $\tau(p^4)=\pm 691$, then $(p, 2\tau(p)^2-3p^{11})\in H^{\pm}_{11,691}(\Z)$.
Lemmas~\ref{Plus691} and \ref{Minus691} show that no such integer points exist.
\qed

\end{document}